	\newcommand{\bb}[1]{\mathbb{#1}}
	\newcommand{\ceil}[1]{\lceil#1\rceil}
	\newcommand{\inv}[1]{#1^{-1}}
	\newcommand{\smpt}[1]{\setminus\{#1\}}
	\newcommand{\vol}{\mathrm{vol}}
	\newtheorem{thm}{Theorem}[section]
	\newtheorem*{thm*}{Theorem}
	\newtheorem{lem}[thm]{Lemma}
	\newtheorem{prop}[thm]{Proposition}
	\theoremstyle{definition}
	\newtheorem{defn}[thm]{Definition}
	\theoremstyle{remark}
	\newtheorem*{rem}{Remark}
	\theoremstyle{remark}
	\theoremstyle{remark}
	\title{The density of visible points in the Ammann-Beenker point set}
	\author{Gustav Hammarhjelm}
\begin{document}

	\maketitle
	
	\begin{abstract}
	The relative density of visible points of the integer lattice $\bb{Z}^d$ is known to be $1/\zeta(d)$ for $d\geq 2$, where $\zeta$ is Riemann's zeta function. In this paper we prove that the relative density of visible points in the Ammann-Beenker point set is given by $2(\sqrt{2}-1)/\zeta_K(2)$, where $\zeta_K$ is Dedekind's zeta function over  $K=\bb{Q}(\sqrt{2})$. 
	\end{abstract}
	
	\section{Introduction}
	\label{secIntro}
	
	A locally finite point set $\mathcal{P}\subset \bb{R}^d$ has an \emph{asymptotic density} (or simply \emph{density}) $\theta(\mathcal{P})$ if
	\[\lim_{R\to\infty}\frac{\#(\mathcal{P}\cap RD)}{\vol(RD)}=\theta(\mathcal{P})\]
	holds for all Jordan measurable $D\subset \bb{R}^d$. The density of a set can be interpreted as the asymptotic number of elements per unit volume. For instance, for a lattice $\mathcal{L}\subset \bb{R}^d$ we have $\theta(\mathcal{L})=\frac{1}{\vol(\bb{R}^d/\mathcal{L})}$. Let
	$\widehat{\mathcal{P}}=\{x\in \mathcal{P}\mid tx\notin \mathcal{P}, \forall t\in (0,1)\}$ denote the subset of the \emph{visible} points of $\mathcal{P}$. If $\mathcal{P}$ is a regular cut-and-project set (see \Cref{defnCPS} below) then it is known that $\theta(\mathcal{P})$ exists. In \cite[Theorem 1]{marklof2014visibility}, J. Marklof and A. Strömbergsson proved that $\theta(\widehat{\mathcal{P}})$ also exists and that $0<\theta(\widehat{\mathcal{P}})\leq\theta(\mathcal{P})$ if $\theta(\mathcal{P})>0$. In particular, for such $\mathcal{P}$ the \textit{relative density of visible points} $\kappa_\mathcal{P}:=\frac{\theta(\widehat{\mathcal{P}})}{\theta(\mathcal{P})}$ exists, but is not known explicitly in most cases.
	
	For $d\geq 2$ we have $\widehat{\bb{Z}^d}=\{(n_1,\ldots,n_d)\in \bb{Z}^d\mid \gcd(n_1,\ldots,n_d)=1\}$ and  $\theta(\widehat{\bb{Z}^d})=1/\zeta(d)$ gives the probability that $d$ random integers share no common factor. This can be derived in several ways, see for instance \cite{nymann1972probability}; we sketch another proof in \Cref{secDensZn} below. More generally, $\theta(\widehat{\mathcal{L}})=\frac{1}{\vol(\bb{R}^d/\mathcal{L})\zeta(d)}$ for a lattice $\mathcal{L}\subset \bb{R}^d$, see e.g. \cite[Prop. 6]{baake2000diffraction}. 
	
	A well-known point set, which can be realised both as the vertices of a substitution tiling and as a cut-and-project set, is the Ammann-Beenker point set. The goal of this paper is to prove that the relative density of visible points in the Ammann-Beenker point set is $2(\sqrt{2}-1)/\zeta_K(2)$. This density was computed by B. Sing in the presentation \cite{singppt1}, but he has not published a proof of this result. 
	
	\section{The density of the visible points of $\bb{Z}^d$}
	\label{secDensZn}
	
	In this section we show that $\theta(\widehat{\bb{Z}^d})=1/\zeta(d)$. We shall see that a lot of inspiration can be drawn from this example when calculating the density of the visible points in the Ammann-Beenker point set.
	
	Fix $R>0$, a Jordan measurable $D\subset \bb{R}^d$ and let $\bb{P}\subset \bb{Z}_{>0}$ denote the set of prime numbers. For each \emph{invisible} point  $n\in \bb{Z}^d\setminus \widehat{\bb{Z}^d}$, there is $p\in\bb{P}$ such that $\frac{n}{p}\in\bb{Z}^d$. Setting $\bb{Z}^d_*=\bb{Z}^d\smpt{(0,\ldots,0)}$ there are only finitely many $p_1,\ldots,p_n\in\bb{P}$ such that $p_i\bb{Z}^d_*\cap RD\neq \emptyset$. By inclusion-exclusion counting we have
	\begin{align*}
	\#(\widehat{\bb{Z}^d}\cap RD)&=\#\left((\bb{Z}^d_*\cap RD)\setminus \bigcup_{p\in\bb{P}}(p\bb{Z}^d_*\cap RD)\right)=\#\left((\bb{Z}^d_*\cap RD)\setminus \bigcup_{i=1}^n(p_i\bb{Z}^d_*\cap RD)\right)\\
	&=\#(\bb{Z}^d_*\cap RD)+\sum_{k=1}^{m}(-1)^{k}\left(\sum_{1\leq i_1<\ldots <i_k\leq m}\#(p_{i_1}\bb{Z}^d_*\cap\cdots\cap p_{i_k}\bb{Z}^d_*\cap RD)\right).
	\end{align*}
	The last sum can be rewritten to
	\[\sum_{n\in\bb{Z}_{>0}}\mu(n)\cdot\#(n\bb{Z}^d_*\cap RD),\]
	where $\mu$ is the Möbius function. Hence
	\[\frac{\#(\widehat{\bb{Z}^d}\cap RD)}{\vol(RD)}=\sum_{n\in\bb{Z}_{>0}}\frac{\mu(n)\cdot\#(n\bb{Z}^d_*\cap RD)}{\vol(RD)}=\sum_{n\in\bb{Z}_{>0}}\frac{\mu(n)}{n^d}\frac{\#(\bb{Z}^d_*\cap \inv{n}RD)}{\vol(\inv{n}RD)}.\]
	Letting $R\to\infty$, switching order of limit and summation (for instance justified by finding a constant $C$ depending on $D$ such that $\#(\bb{Z}_*^d\cap RD)\leq C\vol(RD)$ for all $R$), using $\theta(\bb{Z}^d_*)=1$ and $1/\zeta(s)=\sum_{n\in\bb{Z}_{>0}}\frac{\mu(n)}{n^s}$ for $s>1$, we find that
	\[\theta(\widehat{\bb{Z}^d})=\lim_{R\to\infty}\frac{\#(\widehat{\bb{Z}^d}\cap RD)}{\vol(RD)}=1/\zeta(d).\]
	
	\section{Cut-and-project sets and the Ammann-Beenker point set}
	\label{secCPS}
	The Ammann-Beenker point set can be obtained as the vertices of the Ammann-Beenker tiling, a substitution tiling of the plane using a square and a rhombus as tiles, see e.g. \cite[Chapter 6.1]{baake2013aperiodic}. In this paper however, the Ammann-Beenker set is realised as a \emph{cut-and-project set}, a certain type of point set which we will now define. Cut-and-project sets are sometimes called (Euclidean) model sets. We will use the same notation and terminology for cut-and-project sets as in \cite[Sec. 1.2]{marklof2014free}. For an introduction to cut-and-project sets, see e.g. \cite[Ch. 7.2]{baake2013aperiodic}.
	 
	If $\bb{R}^n=\bb{R}^d\times \bb{R}^m$, let 
	\begin{alignat*}{2}
		\pi:& ~\bb{R}^n\longrightarrow \bb{R}^d  & \pi_{\mathrm{int}}: & ~ \bb{R}^n\longrightarrow \bb{R}^m \\
		& (x_1,\ldots,x_n)\longmapsto (x_1,\ldots,x_d)\hspace{1cm}& & (x_1,\ldots,x_n)\longmapsto (x_{d+1},\ldots, x_n)
	\end{alignat*}
	denote the natural projections.
	
	\begin{defn}
	\label{defnCPS}
	Let $\mathcal{L}\subset \bb{R}^n$ be a lattice and $\mathcal{W}\subset \overline{\pi_{\mathrm{int}}(\mathcal{L})}$ be a set. Then the \emph{cut-and-project} set of $\mathcal{L}$ and $\mathcal{W}$ is given by
	$\mathcal{P}(\mathcal{W},\mathcal{L})=\{\pi(y)\mid y\in \mathcal{L},\pi_{\mathrm{int}}(y)\in\mathcal{W}\}$.
	\end{defn}

	If $\partial W$ has measure zero with respect to any Haar measure on $\overline{\pi_{\mathrm{int}}(\mathcal{L})}$ we say that $\mathcal{P}(\mathcal{W},\mathcal{L})$ is \textit{regular}.
	If the interior of $\mathcal{W}$ (the \emph{window}) is non-empty, $\mathcal{P}(\mathcal{W},\mathcal{L})$ is relatively dense and if $\mathcal{W}$ is bounded, $\mathcal{P}(\mathcal{W},\mathcal{L})$ is uniformly discrete (cf. \cite[Prop. 3.1]{marklof2014free}). To realise the Ammann-Beenker point set in this way, let $K$ be the number field $\bb{Q}(\sqrt{2})$, with algebraic conjugation $x\mapsto \overline{x}$ (we will also write $\overline{x}=(\overline{x_1},\ldots,\overline{x_n})$ for $x=(x_1,\ldots,x_n)\in K^n$) and norm $N(x)=x\overline{x}$. The ring of integers $\mathcal{O}_K=\bb{Z}[\sqrt{2}]$ of $K$ is a Euclidean domain with fundamental unit $\lambda:=1+\sqrt{2}$. With $\zeta:=e^{\tfrac{\pi i}{4}}$ and $\star:K\longrightarrow K$, $x\mapsto x^\star$ the automorphism generated by $\zeta \mapsto \zeta^3$, the Ammann-Beenker point set is in \cite[Example 7.7]{baake2013aperiodic} realised as
	\[\{x=x_1+x_2\zeta\mid x_1,x_2\in\mathcal{O}_K,x^\star\in W_8\},\]
	where $W_8\subset \bb{C}$ is the regular octagon of side length $1$ centered at the origin, with sides perpendicular to the coordinate axes. 
	
	Let \[\mathcal{L}=\{(x,\overline{x})\mid x=(x_1,x_2)\in \mathcal{O}_K^2\}\subset \bb{R}^4\] be the Minkowski embedding of $\mathcal{O}_K^2$ and let \[\widetilde{\mathcal{L}}=\{(x,\overline{x})\in \mathcal{L}\mid (x_1-x_2)/\sqrt{2}\in \mathcal{O}_K\}.\] Then, after a straight-forward translation it is seen that the Ammann-Beenker point set $\mathcal{A}$ can be realised in $\bb{R}^2$ as
	$\mathcal{A}=\frac{1}{\sqrt{2}}\mathcal{P}(\mathcal{W}_\mathcal{A},\widetilde{\mathcal{L}})$,
	where $\mathcal{W}_\mathcal{A}:=\sqrt{2}W_8$, i.e. $\mathcal{A}$ is the scaling of a cut-and-project set according to \Cref{defnCPS}. 
	
	\section{The density of visible points of $\mathcal{A}$}
	\label{secDensAB}
	
	All notation used in this section is defined in and taken from \Cref{secCPS}. Since, for any $\mathcal{P}\subset \bb{R}^d$ whose density exists, and any $c>0$ it holds that $\theta(c\mathcal{P})=c^{-d} \theta(\mathcal{P})$ and $c\widehat{\mathcal{P}}=\widehat{c\mathcal{P}}$, finding $\theta(\widehat{\mathcal{A}'})$ with  $\mathcal{A}':=\sqrt{2}\mathcal{A}=\mathcal{P}(\mathcal{W}_\mathcal{A},\widetilde{\mathcal{L}})$ will give the value of $\theta(\widehat{\mathcal{A}})$. As a first step, in \Cref{subsecDensAB'}, the asymptotic density of the visible points of the simpler set $\mathcal{B}=\mathcal{P}(\mathcal{W}_\mathcal{A},\mathcal{L})=\{x\in\mathcal{O}_K^2\mid \overline{x}\in\mathcal{W}_\mathcal{A}\}\subset\mathcal{O}_K^2$ will be calculated. In \Cref{subsecDensAB} this result will be used to obtain $\theta(\widehat{\mathcal{A}})$.
	
	\subsection{The density of visible points of $\mathcal{B}$}
	\label{subsecDensAB'}
	
	The following general counting formula for bounded subsets of visible points of a point set $\mathcal{P}$ will be needed. Let $\mathcal{P}_*=\mathcal{P}\setminus\{(0,\ldots,0)\}$.
	
	\begin{prop}
	\label{propInclExcl}
	Let $\mathcal{P}\subset \bb{R}^d$ be locally finite and fix a set $C\subset\bb{R}_{>1}$ such that for each $x\in \mathcal{P}\setminus\widehat{\mathcal{P}}$ there exists $c\in C$ with $x/c\in \mathcal{P}$. Let $R>0$ and a bounded set $D\subset \bb{R}^d$ be given. Then 
	\[\#(\widehat{\mathcal{P}}\cap RD)=\sum_{\substack{F\subset C\\\#F<\infty}}(-1)^{\#F}\#\left(\left(\mathcal{P}_*\cap \bigcap_{c\in F}c\mathcal{P}_*\right)\cap RD\right).\]
	\end{prop}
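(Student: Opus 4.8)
The plan is to reduce the claim to the ordinary finite inclusion-exclusion principle, once the potentially infinite index set $C$ has been cut down to a finite one. The starting point is the set-theoretic identity
\[
\widehat{\mathcal{P}} = \mathcal{P}_* \setminus \bigcup_{c \in C} c\mathcal{P}_*.
\]
By hypothesis every invisible point $x \in \mathcal{P}_* \setminus \widehat{\mathcal{P}}$ satisfies $x/c \in \mathcal{P}$ for some $c \in C$, so $x \in c\mathcal{P}_*$; conversely, if $x \in \mathcal{P}_* \cap c\mathcal{P}_*$ for some $c \in C \subset \mathbb{R}_{>1}$, then $tx \in \mathcal{P}$ with $t = 1/c \in (0,1)$, so $x$ is invisible. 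Since $0 \notin \widehat{\mathcal{P}}$ in any case, combining these inclusions yields the identity above.

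Intersecting with $RD$ and abbreviating $A_c := \mathcal{P}_* \cap c\mathcal{P}_* \cap RD$, I obtain
\[
\widehat{\mathcal{P}} \cap RD = (\mathcal{P}_* \cap RD) \setminus \bigcup_{c \in C} A_c,
\]
where each $A_c$ is contained in the finite set $U := \mathcal{P}_* \cap RD$ (finite because $\mathcal{P}$ is locally finite and $RD$ is bounded). The key step is to show that $A_c \neq \emptyset$ for only finitely many $c \in C$. If $A_c \neq \emptyset$, choose $x_c \in A_c$; then $x_c \in U$ and, since $c > 1$ and $RD$ is bounded, the point $x_c/c$ lies in $\mathcal{P}_*$ and in a fixed bounded ball, hence in a finite set $V$ by local finiteness. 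The assignment $c \mapsto (x_c, x_c/c)$ takes values in the finite set $U \times V$ and is injective (given the pair, $c$ is determined because $x_c \neq 0$); therefore only finitely many $c$, say $c_1, \ldots, c_N$, give a nonempty $A_c$.

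With the index set thus reduced to finitely many subsets of $U$, the standard inclusion-exclusion formula gives
\[
\#(\widehat{\mathcal{P}} \cap RD) = \sum_{I \subseteq \{1,\ldots,N\}} (-1)^{\#I}\, \#\!\left(\bigcap_{i \in I} A_{c_i}\right),
\]
and since $\bigcap_{i \in I} A_{c_i} = \bigl(\mathcal{P}_* \cap \bigcap_{i \in I} c_i\mathcal{P}_*\bigr) \cap RD$, this is precisely the asserted sum restricted to $F \subseteq \{c_1, \ldots, c_N\}$. To recover the full sum over all finite $F \subset C$, I would note that any such $F$ containing a $c$ with $A_c = \emptyset$ contributes a term whose counted set is contained in $A_c$, hence is empty; all these extra terms vanish, so the full sum collapses to the finite expression above. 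The main obstacle is the finiteness argument of the second paragraph: besides reducing matters to finite inclusion-exclusion, it is exactly what guarantees that the \emph{a priori} infinite sum in the statement is well-defined, only finitely many summands being nonzero.
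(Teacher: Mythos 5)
Your proof is correct and takes essentially the same approach as the paper's: both establish $\widehat{\mathcal{P}}\cap RD=(\mathcal{P}_*\cap RD)\setminus\bigcup_{c\in C}(\mathcal{P}_*\cap c\mathcal{P}_*\cap RD)$, use local finiteness to show that only finitely many $c\in C$ contribute a nonempty set, and conclude by finite inclusion-exclusion. Your finiteness step via the injection $c\mapsto(x_c,\,x_c/c)$ into a finite product is a clean variant of the paper's constant-subsequence contradiction, and your explicit check that terms with $F\not\subseteq\{c_1,\ldots,c_N\}$ vanish (so the infinite sum is well-defined) merely spells out what the paper leaves tacit.
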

	
	\begin{proof}
		The set $C_R:=\{c\in C\mid \mathcal{P}_*\cap c\mathcal{P}_*\cap RD\neq \emptyset\}$ is finite. Indeed, suppose this is not true and pick distinct $c_1,c_2,\ldots\in C_R$ and corresponding $x_i\in \mathcal{P}_*\cap c_i\mathcal{P}_*\cap RD$. Since $\mathcal{P}$ is locally finite, the sequence $x_1,x_2,\ldots$ contains only finitely many distinct elements. Thus, a subsequence $x_{k_1},x_{k_2},\ldots$ which is constant can be extracted, so that $x_{k_i}/c_{k_i}\in \mathcal{P}_*\cap \frac{RD}{c_{k_i}}\subset \mathcal{P}_*\cap RD$ are all distinct, contradiction to $\mathcal{P}$ being locally finite. Thus, we can write $C_R=\{c_1,\ldots,c_n\}$ for some $c_1,\ldots,c_n\in C$. Then
		\begin{align*}
		&\#(\widehat{\mathcal{P}}\cap RD)=\#\left((\mathcal{P}_*\cap RD)\setminus\bigcup_{c\in C}(\mathcal{P}_*\cap c\mathcal{P}_*\cap RD)\right)\\
		&=\#(\mathcal{P}_*\cap RD)-\#\left(\bigcup_{i=1}^n(\mathcal{P}_*\cap c_i\mathcal{P}_*\cap RD)\right),
		\end{align*}
		from which the result follows from the inclusion-exclusion counting formula for finite unions of finite sets.
	\end{proof}	
	
	A set $C$ as in \Cref{propInclExcl} for $\mathcal{B}$ will be needed, and to this end a visibility condition for the elements of $\mathcal{B}$ is required. Given $x_1,x_2\in \mathcal{O}_K$, let $\gcd(x_1,x_2)$ be a fixed generator of the ideal generated by $x_1,x_2$ and write $\gcd(x_1,x_2)=1$ when $x_1,x_2$ are relatively prime. In the following proposition a visibility condition of the complex realisation of the Ammann-Beenker point set given in \cite[p. 477]{baake2014radial} is adapted to our situation.
	
	\begin{prop}
	\label{propVisCond}
	The visible points of $\mathcal{B}$ are given by \[\widehat{\mathcal{B}}=\{x=(x_1,x_2)\in \mathcal{B}\mid \gcd(x_1,x_2)=1,\lambda \overline{x}\notin \mathcal{W}_\mathcal{A}\}.\]
	\end{prop}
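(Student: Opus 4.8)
The plan is to establish the two inclusions separately, using throughout that $\mathcal{W}_\mathcal{A}=\sqrt{2}W_8$ is compact, convex, symmetric about the origin and contains $0$ in its interior, together with the fact that $\mathcal{O}_K^{\times}=\{\pm\lambda^{j}:j\in\bb{Z}\}$ is infinite with $\lambda>1$. The starting point is that $x=(x_1,x_2)\in\mathcal{B}$ fails to be visible exactly when $tx\in\mathcal{B}$ for some real $t\in(0,1)$. Any such $t$ satisfies $tx\in\mathcal{O}_K^2$, and since some coordinate $x_i$ is nonzero, $t=(tx_i)/x_i\in K$; thus the whole question is controlled by the arithmetic of $\mathcal{O}_K$, and in particular $\overline{tx}=\overline{t}\,\overline{x}$ makes sense.

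For the inclusion $\supseteq$, I would take $x\in\mathcal{B}$ with $\gcd(x_1,x_2)=1$ and $\lambda\overline{x}\notin\mathcal{W}_\mathcal{A}$ and suppose, for contradiction, that $tx\in\mathcal{B}$ for some $t\in(0,1)$. Then $tx_1,tx_2\in\mathcal{O}_K$, and a B\'ezout relation $ax_1+bx_2=1$ with $a,b\in\mathcal{O}_K$ (available since $\gcd(x_1,x_2)=1$) forces $t=a(tx_1)+b(tx_2)\in\mathcal{O}_K$. The decisive lemma is that every $t\in\mathcal{O}_K\cap(0,1)$ satisfies $|\overline{t}|\geq\lambda$: writing $|\overline{t}|=|N(t)|/t$ and using $t<1$, this is clear when $|N(t)|\geq 3$; for units it reduces to $t=\lambda^{-k}$ with $k\geq 1$, giving $|\overline{t}|=\lambda^{k}\geq\lambda$; and for norm $\pm 2$ it reduces to $t=\sqrt{2}\lambda^{-k}$ with $k\geq 1$, giving $|\overline{t}|=\sqrt{2}\lambda^{k}\geq\sqrt{2}\lambda>\lambda$. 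Since $\overline{t}\,\overline{x}\in\mathcal{W}_\mathcal{A}$ and $|\lambda/\overline{t}|\leq 1$, convexity and symmetry of $\mathcal{W}_\mathcal{A}$ then yield $\lambda\overline{x}=(\lambda/\overline{t})(\overline{t}\,\overline{x})\in\mathcal{W}_\mathcal{A}$, a contradiction. Hence $x\in\widehat{\mathcal{B}}$.

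For the inclusion $\subseteq$, suppose $x\in\widehat{\mathcal{B}}$. If $\lambda\overline{x}\in\mathcal{W}_\mathcal{A}$ then, as $\lambda$ is a unit, $\lambda^{-1}x\in\mathcal{O}_K^2$, while $\overline{\lambda^{-1}x}=-\lambda\overline{x}\in\mathcal{W}_\mathcal{A}$ by symmetry, so $\lambda^{-1}x\in\mathcal{B}$ with $\lambda^{-1}=\sqrt{2}-1\in(0,1)$, contradicting visibility; thus $\lambda\overline{x}\notin\mathcal{W}_\mathcal{A}$. To obtain $\gcd(x_1,x_2)=1$ I argue the contrapositive: if $g=\gcd(x_1,x_2)$ is a non-unit, choose a prime $\pi$ with $(\pi)$ dividing the ideal $(x_1,x_2)$ (using that $\mathcal{O}_K$ is a principal ideal domain, being Euclidean), and select the generator so that $1<\pi\leq|N(\pi)|$, equivalently $\pi>1$ and $|\overline{\pi}|\geq 1$. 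Since $\pi\mid x_1,x_2$ we get $x/\pi\in\mathcal{O}_K^2$, and from $|\overline{\pi}|\geq 1$ and convexity we get $\overline{x/\pi}=\overline{x}/\overline{\pi}\in\mathcal{W}_\mathcal{A}$, so $x/\pi\in\mathcal{B}$; as $t=1/\pi\in(0,1)$ this again contradicts visibility.

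The main obstacle, appearing in both directions, is the borderline behaviour at norm $\pm 2$. The positive generators of a prime ideal form a geometric progression of ratio $\lambda\approx 2.414$, so the interval $(1,|N(\pi)|]$ automatically contains a generator once $|N(\pi)|\geq 3>\lambda$; but the ramified prime $\pi=\sqrt{2}$, of norm $-2$, must be handled separately, and there the choice $\pi=\sqrt{2}\in(1,2]$ works. Dually, the naive estimate $|\overline{t}|>|N(t)|$ is insufficient precisely when $|N(t)|=2$, and one must use that the largest element of norm $\pm 2$ in $(0,1)$ is $2-\sqrt{2}=\sqrt{2}\lambda^{-1}$. Pinning down these small-norm cases, and recording that $\mathcal{W}_\mathcal{A}$ is convex, symmetric and contains a neighbourhood of the origin, is where essentially all of the content lies.
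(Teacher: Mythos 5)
Your proof is correct, and in the harder direction it takes a genuinely different route from the paper's. The inclusion $\widehat{\mathcal{B}}\subseteq\{x\in\mathcal{B}\mid \gcd(x_1,x_2)=1,\lambda\overline{x}\notin\mathcal{W}_\mathcal{A}\}$ is essentially the paper's necessity argument: the occluder $x/\lambda$ when $\lambda\overline{x}\in\mathcal{W}_\mathcal{A}$, and a common divisor normalised by units so that its conjugate has absolute value at least $1$ — your normalisation $1<\pi\leq|N(\pi)|$ of a prime divisor, with the ramified prime $\sqrt{2}$ treated separately, is the same small-norm bookkeeping as the paper's choice $1<c<\lambda$ with the split $|N(c)|\geq 3$ versus $c=\sqrt{2}$. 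The real divergence is in $\supseteq$: the paper takes an invisible point $x$, passes to an \emph{innermost} occluder $y=x/c\in\widehat{\mathcal{B}}$ (which requires uniform discreteness of $\mathcal{B}$), applies the already-proved necessity to $y$ to get $\gcd(y_1,y_2)=1$ and hence $c\in\mathcal{O}_K$, and only then cases on $N(c)$; you instead apply B\'ezout to $x$ itself, so that $\gcd(x_1,x_2)=1$ and $tx\in\mathcal{O}_K^2$ force $t\in\mathcal{O}_K$ in one step, and your lemma that every $t\in\mathcal{O}_K\cap(0,1)$ satisfies $|\overline{t}|\geq\lambda$ (correctly verified in the three cases $|N(t)|\geq 3$, $|N(t)|=1$, $|N(t)|=2$) disposes of all occluders at once. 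This buys a self-contained, one-pass argument with no appeal to uniform discreteness and no bootstrapping through the other inclusion, and it generalises just as well as the paper's remark indicates, since you only ever use star-shapedness and symmetry of the window (convexity is more than you need). It is worth noting that your lemma is, up to scaling by units, exactly the fact the paper later verifies by hand in the proof of \Cref{propOcclSet}, namely $\{(x,\overline{x})\mid x\in\mathcal{O}_K\}\cap\left((1,\lambda)\times(-1,1)\right)=\emptyset$: if some $t\in\mathcal{O}_K\cap(0,1)$ had $|\overline{t}|<\lambda$, then either $t\lambda$ would lie in that forbidden box or $|N(t)|<1$ would force $t=0$. So your route would let the paper merge two separate verifications into a single reusable lemma. (One cosmetic point: your setup sentence $t=(tx_i)/x_i\in K$ presumes $x\neq 0$, which is harmless since $0$ is never visible and fails $\gcd(x_1,x_2)=1$, but a one-line remark would make this airtight.)
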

	
	\begin{proof}
	First the  necessity of the visibility conditions is established. Take $x=(x_1,x_2)$ and suppose that $\gcd(x_1,x_2)\neq 1$ so that there exists $c\in\mathcal{O}_K$ with $|N(c)|>1$ and $c\mid x_1,x_2$. Scaling $c$ by units we may assume that $1<c<\lambda$. Suppose first that $|N(c)|=|\overline{c}|c\geq 3$, which implies $|\overline{c}|>1$. By noting that $\mathcal{W}_\mathcal{A}$ is star-shaped with respect to the origin and $\mathcal{W}_\mathcal{A}=-\mathcal{W}_\mathcal{A}$ it follows that $x/c\in\mathcal{B}$, so $x$ is invisible. If $|N(c)|=2$, then each prime factor of $c$ must divide $2=\sqrt{2}\cdot \sqrt{2}$, so it can be assumed that $c=\sqrt{2}$  and hence $x$ is occluded by $x/\sqrt{2}$. If $\lambda \overline{x}\in\mathcal{W}_\mathcal{A}$ it follows immediately that $x/\lambda\in\mathcal{B}$.
			
	We now turn to the sufficiency of the visibility conditions. Take $x=(x_1,x_2)\in\mathcal{B}\setminus\widehat{\mathcal{B}}$ and $c>1$ such that $x/c\in\mathcal{B}$. As $\mathcal{B}$ is uniformly discrete, we may assume that $y:=x/c\in \widehat{\mathcal{B}}$. This implies, by necessity above, that $\gcd(y_1,y_2)=1$. Now, since $x_i=cy_i$ it follows that $c\in K$. Write $c=a/b$ with $a,b\in\mathcal{O}_K$ relatively prime. If $b$ is not a unit, $\gcd(y_1,y_2)=1$ is contradicted, hence $c\in\mathcal{O}_K$.

	If $|N(c)|\neq 1$ then $\gcd(x_1,x_2)\neq1$. Otherwise, $c>1$ is a unit, i.e. $c=\lambda^k$ for some integer $k>0$. Thus $\frac{\overline{x}}{\overline{c}}=\frac{\overline{x}}{\overline{\lambda}^k}\in\mathcal{W}_\mathcal{A}$. Since $\frac{1}{\overline{\lambda}}=-\lambda$ we get $(-\lambda)^k\overline{x}\in\mathcal{W}_\mathcal{A}$ and thus also $\lambda \overline{x}\in\mathcal{W}_\mathcal{A}$ as $\mathcal{W}_\mathcal{A}$ is star-shaped with respect to the origin and $-\mathcal{W}_\mathcal{A}=\mathcal{W}_\mathcal{A}$. This establishes sufficiency of the visibility conditions.	
	\end{proof}
	
	\begin{rem}
	Note that the proof works just as well for more general windows, that is, $\widehat{\mathcal{P}(\mathcal{W},\mathcal{L})}=\{x\in \mathcal{P}(\mathcal{W},\mathcal{L})\mid \gcd(x_1,x_2)=1,\lambda \overline{x}\notin \mathcal{W}\}$ if $\mathcal{W}\subset\bb{R}^2$ is bounded with non-empty interior, star-shaped with respect to the origin and $-\mathcal{W}=\mathcal{W}$.
	\end{rem}
	
	Let now \[\bb{P}=\{\pi\in\mathcal{O}_K\mid \pi ~\mathrm{prime}, 1<\pi<\lambda\}~\text{and}~  C=\bb{P}\cup \{ \lambda\}\] so that $\bb{P}$ is a set that contains precisely one associate of every prime of $\mathcal{O}_K$.
	Then we have the following proposition.
	
	\begin{prop}
	\label{propOcclSet}
	For each $x\in\mathcal{B}\setminus\widehat{\mathcal{B}}$ there is $c\in C$ such that $x/c\in \mathcal{B}$.
	\end{prop}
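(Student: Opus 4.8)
The plan is to combine the visibility criterion of \Cref{propVisCond} with the structure of $C=\bb{P}\cup\{\lambda\}$. By \Cref{propVisCond}, a point $x=(x_1,x_2)\in\mathcal{B}\setminus\widehat{\mathcal{B}}$ must violate at least one of the two defining conditions of $\widehat{\mathcal{B}}$, so either $\gcd(x_1,x_2)\neq 1$ or $\lambda\overline{x}\in\mathcal{W}_\mathcal{A}$; I would treat these two cases separately. In the second case I would take $c=\lambda$: since $\lambda$ is a unit we have $x/\lambda\in\mathcal{O}_K^2$, and from $\overline{\lambda}=-\lambda^{-1}$ one computes $\overline{x/\lambda}=-\lambda\overline{x}$, which lies in $\mathcal{W}_\mathcal{A}$ because $\lambda\overline{x}\in\mathcal{W}_\mathcal{A}$ and $\mathcal{W}_\mathcal{A}=-\mathcal{W}_\mathcal{A}$; hence $x/\lambda\in\mathcal{B}$, as required.

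In the first case, where $\gcd(x_1,x_2)\neq 1$, I would choose a prime $\pi_0\in\mathcal{O}_K$ dividing both $x_1$ and $x_2$ and replace it by the unique associate $\pi\in\bb{P}$; this associate exists because $(1,\lambda)$ is a fundamental domain for multiplication by the fundamental unit $\lambda$ on $\bb{R}_{>0}$, and a non-unit has no positive associate equal to $1$ or $\lambda$. Since $\pi\mid x_1,x_2$ we get $x/\pi\in\mathcal{O}_K^2$, and membership $x/\pi\in\mathcal{B}$ then reduces to the window condition $\overline{x}/\overline{\pi}\in\mathcal{W}_\mathcal{A}$. Writing $\overline{x}/\overline{\pi}=(1/\overline{\pi})\,\overline{x}$ as a real rescaling of $\overline{x}\in\mathcal{W}_\mathcal{A}$, this follows at once from the star-shapedness and the symmetry $\mathcal{W}_\mathcal{A}=-\mathcal{W}_\mathcal{A}$ as soon as $|\overline{\pi}|\geq 1$.

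The crux of the argument — and the only place where the specific arithmetic of $K=\bb{Q}(\sqrt{2})$ is used — is therefore the estimate $|\overline{\pi}|>1$ for every $\pi\in\bb{P}$. I would obtain it from $|\overline{\pi}|=|N(\pi)|/\pi$ together with $1<\pi<\lambda$: if $|N(\pi)|\geq 3$ then $|N(\pi)|\geq 3>\lambda>\pi$ gives $|\overline{\pi}|>1$ immediately, while the only prime with $|N(\pi)|=2$ is $\sqrt{2}$ up to units, whose representative in $(1,\lambda)$ is $\sqrt{2}$ itself with $|\overline{\pi}|=\sqrt{2}>1$. This is exactly the case distinction already appearing in the necessity half of \Cref{propVisCond}, so the proof is essentially a repackaging of that computation in which the occluding element is taken to be a concrete prime associate lying in $\bb{P}$ rather than an arbitrary common divisor. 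I expect this norm bound, ensuring that passing from a common divisor to a prime associate in $\bb{P}$ does not destroy window membership, to be the main (and only genuinely nontrivial) obstacle; everything else is bookkeeping with units and associates.
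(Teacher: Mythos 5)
Your proof is correct, and it reaches the conclusion by a mildly but genuinely different route than the paper. The paper does not split according to which of the two conditions in \Cref{propVisCond} fails; instead it re-enters the sufficiency half of that proposition's \emph{proof} to extract an occluder $c\in\mathcal{O}_K$, $c>1$, with $x/c\in\mathcal{B}$, and then distinguishes whether $c$ is a unit (take $\lambda$) or not (take a prime associate $\pi\in\bb{P}$ dividing $c$). Your decomposition --- $\gcd(x_1,x_2)\neq 1$ versus $\lambda\overline{x}\in\mathcal{W}_\mathcal{A}$ --- uses \Cref{propVisCond} only as a black box, which is more modular but equivalent in substance (your unit case matches the paper's exactly). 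The more interesting divergence is at the crux $|\overline{\pi}|>1$ for $\pi\in\bb{P}$: the paper obtains this geometrically from the hand-verified fact that the Minkowski embedding $\{(x,\overline{x})\mid x\in\mathcal{O}_K\}$ misses the region $(1,\lambda)\times(-1,1)$ (note the paper's printed ``$|\pi|>1$'' is evidently a typo for $|\overline{\pi}|>1$, since $\pi>1$ holds by definition of $\bb{P}$), whereas you derive it arithmetically from $|\overline{\pi}|=|N(\pi)|/\pi$ with $1<\pi<\lambda$, splitting into $|N(\pi)|\geq 3$ (giving $|\overline{\pi}|>3/\lambda>1$) and $|N(\pi)|=2$, which by unique factorization forces $\pi$ associate to $\sqrt{2}$, whose representative in $(1,\lambda)$ is $\sqrt{2}$ itself with $|\overline{\sqrt{2}}|=\sqrt{2}>1$. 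Your version is self-contained and identifies the extremal case structurally; the paper's lattice fact is strictly stronger --- it bounds $|\overline{x}|$ for \emph{every} $x\in\mathcal{O}_K$ with $x\in(1,\lambda)$, not just primes --- though that extra generality is never exploited (the same estimate is invoked again only for primes, in \Cref{propInterchange}). Both arguments are complete, including the degenerate point $x=0$, which your gcd case handles trivially since every $\pi$ divides $0$.
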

	
	\begin{proof} Fix $x\in\mathcal{B}\setminus\widehat{\mathcal{B}}$.
	As seen in the proof of \Cref{propVisCond} there is $c\in \mathcal{O}_K$, $c>1$, such that $x/c\in \mathcal{B}$. If $c$ is not a unit, fix $\pi\in \bb{P}$ so that $\pi\mid c$. It can be verified by hand that $\{(x,\overline{x})\mid x\in\mathcal{O}_K\}\cap ((1,\lambda)\times (-1,1))=\emptyset$, hence $|\pi|>1$ and $x/\pi\in \mathcal{B}$. If $c$ is a unit, $x/\lambda\in\mathcal{B}$ is immediate.
	\end{proof}
	
	Given a finite set $F\subset \mathcal{O}_K$ let $I_F$ be the (principal) ideal generated by the elements of $F$ if $F\neq \emptyset$ and $I_F=\mathcal{O}_K$ otherwise. Let $\ell_F$ denote a fixed \emph{least common multiple} of $F$, that is, a generator of the ideal $\bigcap_{c\in F}c\mathcal{O}_K$. Let also $m_F=\min\{1,\min_{c\in F}|\overline{c}|\}$ and $\mathcal{L}_F=\{(\ell_Fx,\overline{\ell_Fx})\mid x\in \mathcal{O}_K^2\}$. Write $I\triangleleft \mathcal{O}_K$ when $I\subset \mathcal{O}_K$ is an ideal and define the \emph{absolute norm} $N(I)$ of $I$ by $|N(x)|$, where $x$ is any generator of $I$. Recall \emph{Dedekind's zeta function} $\zeta_K(s)=\sum_{I\triangleleft \mathcal{O}_K}\frac{1}{N(I)^s}$ for $s\in \bb{C}$ with $\mathrm{Re}(s)>1$.
	
	Given a finite set $F\subset C$ it is verified that $\mathcal{B}_*\cap \bigcap_{c\in F}c\mathcal{B}_*=\mathcal{P}(m_F\mathcal{W}_\mathcal{A},\mathcal{L}_F)\smpt{0}$. For any $R>0$ and bounded $D\subset \bb{R}^2$, Propositions \ref{propInclExcl}, \ref{propOcclSet} imply that
	\begin{equation}
	\label{eqA'1}
	\#(\widehat{\mathcal{B}}\cap RD)=\sum_{\substack{F\subset C\\\#F<\infty}}(-1)^{\#F}\#\left((\mathcal{P}(m_F\mathcal{W}_\mathcal{A},\mathcal{L}_F)\smpt{0})\cap RD\right).
	\end{equation}	
	Since $\ell_F\mathcal{O}_K^2\subset\pi_{\mathrm{int}}(\mathcal{L}_F)\subset\bb{R}^2$ is dense we have \[\theta(\mathcal{P}(m_F\mathcal{W}_\mathcal{A},\mathcal{L}_F)\smpt{0})=\frac{\vol(m_F\mathcal{W}_\mathcal{A})}{\vol(\bb{R}^4/\mathcal{L}_F)}\] from \cite[Prop. 3.2]{marklof2014free}. Dividing \eqref{eqA'1} by $\vol(RD)$, letting $R\to\infty$ and switching order of limit and summation (to be justified in \Cref{propInterchange} below) we find that \[\theta(\widehat{\mathcal{B}})=\sum_{\substack{F\subset C\\\#F<\infty}}(-1)^{\#F}\frac{\vol(m_F\mathcal{W}_\mathcal{A})}{\vol(\bb{R}^4/\mathcal{L}_F)}=\sum_{\substack{F\subset C\\\#F<\infty}}(-1)^{\#F}\frac{m_F^2(1+\sqrt{2})}{2N(\ell_F)^2},\]
	since $\vol(\mathcal{W}_\mathcal{A})=4(1+\sqrt{2})$ and $\vol(\bb{R}^4/\mathcal{L}_F)=8N(\ell_F)^2$.
	The value of the right hand sum will be shown to be $1/\zeta_K(2)$ in \Cref{thmAsDensA'} below. The following lemma gives a bound on the number of points in the intersection of a lattice and a box in terms of the volume of the box, provided that the box is "not too thin".
	
	\begin{lem}
	\label{lemLatticeBoxBound}
	Let $\mathcal{L}\subset \bb{R}^d$ be a lattice and let $c>0$ be given. For any $a_i,b_i\in\bb{R}$ with $b_i-a_i>c$ set $B=\prod_{i=1}^{d}[a_i,b_i]$. Then there is a constant $L$ depending only on $\mathcal{L}$ and $c$ such that $\#(B\cap\mathcal{L})\leq L\vol(B)$.
	\end{lem}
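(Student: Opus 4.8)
The plan is to exhibit a single lattice $\mathcal{L}'\supset \mathcal{L}$ (independent of $B$) that is "commensurate enough" with $\mathcal{L}$ and aligned to a coordinate grid, so that each point of $B\cap\mathcal{L}$ can be charged to a unit cell of $\mathcal{L}'$ lying inside a slightly enlarged box, and then count cells by volume. Concretely, I would first fix a fundamental parallelepiped $P$ for $\mathcal{L}$, say $P=\{\sum_{i} t_i v_i : t_i\in[0,1)\}$ for a basis $v_1,\dots,v_d$ of $\mathcal{L}$, and let $\rho$ be its diameter. The key geometric observation is that the translates $\{x+P : x\in\mathcal{L}\}$ tile $\bb{R}^d$, and each translate has volume $\vol(P)=\vol(\bb{R}^d/\mathcal{L})=:V$. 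For any $x\in B\cap\mathcal{L}$, the translate $x+P$ is contained in the enlarged box $B^+:=\prod_{i=1}^d[a_i-\rho,\,b_i+\rho]$.

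Since distinct lattice points give disjoint translates, the number of points we are counting satisfies
\[
\#(B\cap\mathcal{L})\cdot V \;=\; \vol\!\Bigl(\bigsqcup_{x\in B\cap\mathcal{L}}(x+P)\Bigr)\;\leq\;\vol(B^+)\;=\;\prod_{i=1}^d(b_i-a_i+2\rho).
\]
Dividing by $V$ gives $\#(B\cap\mathcal{L})\leq V^{-1}\prod_{i=1}^d(b_i-a_i+2\rho)$. It remains to absorb the $+2\rho$ terms into a multiple of $\vol(B)=\prod_i(b_i-a_i)$. Here the hypothesis $b_i-a_i>c$ enters: writing $\ell_i:=b_i-a_i>c$, we have
\[
\prod_{i=1}^d(\ell_i+2\rho)=\Bigl(\prod_{i=1}^d\ell_i\Bigr)\prod_{i=1}^d\Bigl(1+\tfrac{2\rho}{\ell_i}\Bigr)\leq \vol(B)\cdot\Bigl(1+\tfrac{2\rho}{c}\Bigr)^d,
\]
since each factor $1+2\rho/\ell_i\leq 1+2\rho/c$. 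Setting $L:=V^{-1}(1+2\rho/c)^d$ yields $\#(B\cap\mathcal{L})\leq L\vol(B)$, and $L$ depends only on $\mathcal{L}$ (through $V$ and $\rho$) and on $c$, exactly as required.

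I do not expect a genuine obstacle in this argument; the only point demanding a little care is the \emph{disjointness} of the translates $x+P$ and the containment $x+P\subseteq B^+$, both of which follow from $P$ being a half-open fundamental domain of diameter $\rho$. The hypothesis $b_i-a_i>c$ is used solely to keep the ratio $\rho/\ell_i$ bounded, which is precisely what prevents the bound from blowing up for "thin" boxes; without such a lower bound on the side lengths no constant $L$ could work, since a degenerate box of zero volume may still contain a lattice point. One could alternatively phrase the whole estimate via the inner-point/outer-point counting of the grid $\mathcal{L}$, but the volume-packing argument above is the cleanest route and avoids any explicit enumeration.
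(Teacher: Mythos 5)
Your argument is correct, but it proceeds by a genuinely different mechanism than the paper's. The paper \emph{covers} $B$: it chops the box into $n=\prod_i\ceil{(b_i-a_i)/c}$ translated cubes of side $c$, bounds $n$ by $2^d\vol(B)/c^d$ using $b_i-a_i>c$, and multiplies by the uniform constant $D=\sup_{t\in\bb{R}^d}\#(\mathcal{L}\cap(t+[0,c]^d))$, whose finiteness is asserted without proof. You instead \emph{pack}: you attach to each point of $B\cap\mathcal{L}$ a disjoint translate of a half-open fundamental parallelepiped $P$, observe these all fit inside the enlarged box $B^+=\prod_i[a_i-\rho,b_i+\rho]$ (valid since $0\in P$ forces every coordinate of every $p\in P$ to have modulus at most the diameter $\rho$), and compare volumes, with the hypothesis $b_i-a_i>c$ entering only to bound the overhead factor $\prod_i(1+2\rho/\ell_i)\leq(1+2\rho/c)^d$. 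The two uses of the thickness hypothesis are morally parallel, but your route is more self-contained --- it needs nothing beyond disjointness of fundamental-domain translates and monotonicity of volume, avoiding the auxiliary constant $D$ entirely --- and it produces the explicit value $L=V^{-1}(1+2\rho/c)^d$ in terms of the covolume $V$ and cell diameter $\rho$. One cosmetic point: your opening sentence announces a superlattice $\mathcal{L}'\supset\mathcal{L}$ aligned to a coordinate grid, but the argument you actually execute never uses such an $\mathcal{L}'$; you should delete that sentence, as the packing argument stands on its own.
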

			
	\begin{proof}
	Let $n_i=\ceil{\frac{b_i-a_i}{c}}\in\bb{Z}_+$. Then
	$\frac{b_i-a_i}{c}\leq n_i< \frac{b_i-a_i}{c}+1=\frac{b_i-a_i+c}{c}<\frac{2(b_i-a_i)}{c}.$
	Hence, with $n=\prod_{i=1}^dn_i$ it follows that $n\leq \frac{2^d\vol(B)}{c^d}$. From $b_i\leq a_i+cn_i$ also
	$B\subset\prod_{i=1}^d[a_i,a_i+cn_i].$
	Let $a=(a_1,\ldots,a_d)$ and
	consider $-a+\prod_{i=1}^d[a_i,a_i+cn_i]=\prod_{i=1}^d[0,cn_i]$. We have
	$\prod_{i=1}^d[0,cn_i]=\bigcup_{m\in\bb{N}^d,0\leq m_i<n_i}(mc+[0,c]^d).$
	Hence, $B\subset \bigcup_{m\in\bb{N}^d,0\leq m_i<n_i}(a+mc+[0,c]^d)=:B'$. Find now $D>0$ depending on $\mathcal{L}$ and $c$ such that $\sup_{t\in\bb{R}^d}\#(\mathcal{L}\cap (t+[0,c]^d))=D$. Hence
	$\#(B\cap \mathcal{L})\leq \#(B'\cap\mathcal{L})\leq n D\leq \frac{2^dD}{c^d}\vol(B)$, so one can take $L=\frac{2^dD}{c^d}$.
	\end{proof}
	
	The following bound will be crucial in the justification of interchanging limit and summation in \eqref{eqA'1} after division by $\vol(RD)$.
	
	\begin{lem}
		\label{lem-Estimate1}
		Let $D\subset \bb{R}^2$ be Jordan measurable.
		Then there is a constant $\widetilde{L}>0$ depending only on $D$ such that for every $R>0$ and $F\subset C$ with $\#F<\infty$,
		\[\#((\mathcal{P}(m_F\mathcal{W}_\mathcal{A},\mathcal{L}_F)\cap RD)\smpt{0})\leq \frac{\widetilde{L}R^2}{N(\ell_F)^2}.\]
		\end{lem}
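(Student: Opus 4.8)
The plan is to pass to the full lattice $\mathcal{L}_F\subset\bb{R}^4$, enclose $RD$ and the window in boxes, and then reduce the four-dimensional count to a product of two identical two-dimensional counts. Since $D$ is bounded, fix $M>0$ with $D\subset[-M,M]^2$, and fix $\rho>0$ with $\mathcal{W}_\mathcal{A}\subset[-\rho,\rho]^2$; as $0<m_F\leq 1$ and $\mathcal{W}_\mathcal{A}$ is star-shaped with respect to the origin we have $m_F\mathcal{W}_\mathcal{A}\subset[-\rho,\rho]^2$ as well. A nonzero point of $\mathcal{P}(m_F\mathcal{W}_\mathcal{A},\mathcal{L}_F)\cap RD$ has the form $(\ell_Fx_1,\ell_Fx_2)$ for some $(x_1,x_2)\in\mathcal{O}_K^2$ with $|\ell_Fx_i|\leq MR$ and $|\overline{\ell_Fx_i}|\leq\rho$ for $i=1,2$; equivalently $|x_i|\leq a$ and $|\overline{x_i}|\leq b$, where $a=MR/|\ell_F|$ and $b=\rho/|\overline{\ell_F}|$. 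Writing $\Lambda=\{(x,\overline{x})\mid x\in\mathcal{O}_K\}\subset\bb{R}^2$ for the Minkowski embedding of $\mathcal{O}_K$ and $N_1=\#(\Lambda\cap([-a,a]\times[-b,b]))$, the two coordinates decouple and the count in question is at most $N_1^2$. Since $ab=MR\rho/N(\ell_F)$, so that $(ab)^2=M^2\rho^2R^2/N(\ell_F)^2$, it suffices to find a constant $c_1$ depending only on $\Lambda$ with $N_1^2\leq c_1(ab)^2$ whenever the count is nonzero; the lemma then holds with $\widetilde{L}=c_1M^2\rho^2$.

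A first observation disposes of the degenerate case. Every nonzero $x\in\mathcal{O}_K$ satisfies $|x||\overline{x}|=|N(x)|\geq 1$, so if $ab<1$ the box $[-a,a]\times[-b,b]$ meets $\Lambda$ only at the origin; then $x_1=x_2=0$ is forced, the count (which excludes the origin) is $0$, and the bound is trivial. Hence we may assume $ab\geq 1$.

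The main obstacle is that the box $[-a,a]\times[-b,b]$ can be arbitrarily thin, since the ratio $a/b=MR\,N(\ell_F)/(\rho|\ell_F|^2)$ is unbounded as $F$ varies, so \Cref{lemLatticeBoxBound} cannot be applied directly. This is where the fundamental unit enters: the map $\phi:(u,v)\mapsto(\lambda u,\overline{\lambda}v)$ sends $\Lambda$ bijectively onto itself, is area-preserving because $|\det\phi|=|N(\lambda)|=1$, and satisfies $|\overline{\lambda}|=\lambda^{-1}$, so $\phi^k$ scales the two half-widths of the box by $\lambda^{k}$ and $\lambda^{-k}$ while fixing their product $ab$. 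Choosing the integer $k$ with $|2k-\log_\lambda(b/a)|\leq 1$ turns the box into a box $B_k$ whose two half-widths both lie in $[\lambda^{-1/2}\sqrt{ab},\lambda^{1/2}\sqrt{ab}]$, and $N_1=\#(\Lambda\cap B_k)$ because $\phi^k(\Lambda)=\Lambda$.

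It then remains to bound $\#(\Lambda\cap B_k)$ for this roughly square box, using $ab\geq 1$. If $ab\geq\lambda$, both side lengths of $B_k$ exceed $1$, so \Cref{lemLatticeBoxBound} (applied to $\Lambda$ with $c=1$) gives $N_1\leq L\vol(B_k)=4L\,ab$ for a constant $L$ depending only on $\Lambda$, whence $N_1^2\leq 16L^2(ab)^2$. If instead $1\leq ab<\lambda$, then $B_k\subset[-\lambda,\lambda]^2$, so $N_1\leq D_0:=\#(\Lambda\cap[-\lambda,\lambda]^2)$, a constant depending only on $\Lambda$, and $N_1^2\leq D_0^2\leq D_0^2(ab)^2$. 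In either case $N_1^2\leq\max\{16L^2,D_0^2\}(ab)^2$, so one may take $c_1=\max\{16L^2,D_0^2\}$ and hence $\widetilde{L}=M^2\rho^2\max\{16L^2,D_0^2\}$, which depends only on $D$ since $\lambda,\rho,L,D_0$ are absolute constants. This completes the proof.
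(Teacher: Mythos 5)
Your proof is correct, and at its core it uses the same two ingredients as the paper --- renormalization by the fundamental unit $\lambda$ to repair the arbitrarily thin box, followed by an application of \Cref{lemLatticeBoxBound} --- but the implementation is genuinely different. The paper stays in $\bb{R}^4$: it maps the count bijectively into $\mathcal{L}\cap\left(\tfrac{RD}{\ell_F}\times\tfrac{\mathcal{W}_\mathcal{A}}{|\overline{\ell_F}|}\right)$, uses the freedom to rescale the generator $\ell_F$ by units so that $R/\ell_F$ lands in a fixed window, and then applies \Cref{lemLatticeBoxBound} to the four-dimensional lattice $\mathcal{L}$; the regime where the box is too small to contain nonzero points is handled by a compactness argument producing a threshold $c$ with $(\mathcal{L}\cap(\lambda D\times c'\mathcal{W}_\mathcal{A}))\smpt{0}=\emptyset$ for all $c'<c$. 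You instead decouple the two $\mathcal{O}_K$-coordinates, bounding the count by $N_1^2$ where $N_1$ is a count over the two-dimensional Minkowski lattice $\Lambda$ of $\mathcal{O}_K$; you balance the box with the explicit automorphism $\phi^k$, $\phi(u,v)=(\lambda u,\overline{\lambda}v)$ --- the same unit trick, realized as an area-preserving self-map of $\Lambda$ with a chosen exponent $k$ rather than as a normalization of the representative $\ell_F$ --- and you replace the compactness step by the elementary observation that $|x|\,|\overline{x}|=|N(x)|\geq 1$ for nonzero $x\in\mathcal{O}_K$, together with the trivial bound $N_1\leq\#(\Lambda\cap[-\lambda,\lambda]^2)$ in the intermediate regime $1\leq ab<\lambda$. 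All your steps check out: $\phi$ preserves $\Lambda$ because $\lambda^{-1}=-\overline{\lambda}\in\mathcal{O}_K$; the choice of $k$ with $|2k-\log_\lambda(b/a)|\leq 1$ pins both half-widths into $[\lambda^{-1/2}\sqrt{ab},\lambda^{1/2}\sqrt{ab}]$; and for $ab\geq\lambda$ both side lengths exceed $1$, so \Cref{lemLatticeBoxBound} with $c=1$ applies and yields $N_1\leq 4Lab$. What your route buys is a fully explicit, self-contained argument with all constants visible and the three regimes $ab<1$, $1\leq ab<\lambda$, $ab\geq\lambda$ cleanly separated; it also sidesteps a small slip in the paper's second case, where the displayed inequality $\tfrac{1}{|\overline{\ell_F}|}\geq\tfrac{1}{\lambda}\sqrt{c}>\sqrt{c}$ cannot hold as written since $\lambda>1$ (harmless in the paper: one simply invokes \Cref{lemLatticeBoxBound} with the constant $\sqrt{c}/\lambda$ instead). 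What the paper's four-dimensional version buys is independence from the product structure of $\mathcal{O}_K^2$, though since you only ever enclose $D$ and $\mathcal{W}_\mathcal{A}$ in coordinate boxes before counting, that generality costs you nothing for this lemma.
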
 
		
		\begin{proof}
		By definition \[\#((\mathcal{P}(m_F\mathcal{W}_\mathcal{A},\mathcal{L}_F)\smpt{0})\cap RD)=\#(\{x\in \ell_F\mathcal{O}_K^2\mid\overline{x}\in m_F\mathcal{W}_\mathcal{A}\}\smpt{0})\cap RD).\] Note that this number is independent of the choice of $\ell_F$. There is a bijection \[(\{x\in \ell_F\mathcal{O}_K^2\mid \overline{x}\in m_F\mathcal{W}_\mathcal{A}\}\smpt{0})\cap RD\longrightarrow(\{x\in\mathcal{O}_K^2\mid \overline{x}\in \tfrac{m_F\mathcal{W}_\mathcal{A}}{|\overline{\ell_F}|}\}\smpt{0})\cap \tfrac{RD}{\ell_F}\] given by $x\mapsto \tfrac{x}{\ell_F}$, so it suffices to estimate the number of elements in the latter set. Since $m_F\leq 1$ it follows that $\left(\mathcal{L}\cap \left(\tfrac{RD}{\ell_F}\times\tfrac{m_F\mathcal{W}_\mathcal{A}}{|\overline{\ell_F}|} \right)\right)\smpt{0}\subset \left(\mathcal{L}\cap \left(\tfrac{RD}{\ell_F}\times\tfrac{\mathcal{W}_\mathcal{A}}{|\overline{\ell_F}|} \right)\right)\smpt{0}$. Fix real numbers $m_1,m_2>1$ so that $D\subset [-m_1,m_1]^2=:B_1$ and $\mathcal{W}_\mathcal{A}\subset [-m_2,m_2]^2=:B_2$.
		
		Fix a number $c$ so that $c'<c$ implies $(\mathcal{L}\cap(\lambda D\times c'\mathcal{W}_\mathcal{A}))\smpt{0}=\emptyset$. This can be done, for otherwise $(\mathcal{L}\cap(\lambda D\times c'\mathcal{W}_\mathcal{A}))\smpt{0}$ would be non-empty for each $c'>0$, hence $\mathcal{L}\cap (\lambda D\times\mathcal{W}_\mathcal{A})$ would contain infinitely many points, contradiction, since $\mathcal{L}$ is a lattice and $\lambda D\times\mathcal{W}_\mathcal{A}$ is bounded.
		
		Suppose first that $\frac{R}{|\ell_F\overline{\ell_F}|}<c$. Scale $\ell_F$ by units so that $1\leq \tfrac{R}{\ell_F}<\lambda$ which gives $\tfrac{1}{|\overline{\ell_F}|}<c$. Hence
		$\left(\mathcal{L}\cap\left(\tfrac{RD}{\ell_F}\times \tfrac{\mathcal{W}_\mathcal{A}}{|\overline{\ell_F}|}\right)\right)\smpt{0}\subset \left(\mathcal{L}\cap\lambda D\times\left( \tfrac{\mathcal{W}_\mathcal{A}}{|\overline{\ell_F}|}\right)\right)\smpt{0}=\emptyset$
		and therefore $\#\left(\left(\mathcal{L}\cap\left(\tfrac{RD}{\ell_F}\times \tfrac{\mathcal{W}_\mathcal{A}}{|\overline{\ell_F}|}\right)\right)\smpt{0}\right)=0$.
	
		Suppose now that $\frac{R}{|\ell_F\overline{\ell_F}|}\geq c$. Scale $\ell_F$ so that $\sqrt{c}\leq \tfrac{R}{\ell_F}<\lambda \sqrt{c}$. This implies that $\tfrac{1}{|\overline{\ell_F}|}\geq \tfrac{1}{\lambda}\sqrt{c}>\sqrt{c}$. Thus, $[0,\sqrt{c}]^4\subset \tfrac{RB_D}{\ell_F}\times \tfrac{B_\mathcal{W}}{|\overline{\ell_F}|}=:B$. From \Cref{lemLatticeBoxBound} we get a constant $L$ only depending on $\mathcal{L}$, $\sqrt{c}$ such that $
		\#(B\cap \mathcal{L})\leq L\vol(B)=L\cdot 16m_1^2m_2^2\frac{R^2}{N(\ell_F)^2}.$
		Now, since $\tfrac{RD}{\ell_F}\times \tfrac{\mathcal{W}_\mathcal{A}}{|\overline{\ell_F}|}\subset B$ we get that
		\[\#\left(\left(\mathcal{L}\cap\tfrac{RD}{\ell_F}\times \left(\tfrac{\mathcal{W}}{|\overline{\ell_F}|}\right)\right)\smpt{0}\right)\leq\frac{\widetilde{L}R^2}{N(\ell_F)^2}\]
		with $\widetilde{L}:=16m_1^2m_2^2L$.
	\end{proof}
	
	\begin{prop}
	\label{propInterchange}
	The equality
	\[\lim_{R\to\infty}\sum_{\substack{F\subset C\\\#F<\infty}}\frac{(-1)^{\#F}\#\left((\mathcal{P}(m_F\mathcal{W}_\mathcal{A},\mathcal{L}_F)\smpt{0})\cap RD\right)}{\vol(RD)}=\sum_{\substack{F\subset C\\\#F<\infty}}\frac{(-1)^{\#F}m_F^2(1+\sqrt{2})}{2N(\ell_F)^2}\]
	holds for all Jordan measurable $D\subset \bb{R}^2$.
	\end{prop}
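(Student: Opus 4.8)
The plan is to read the displayed equality as an instance of the dominated convergence theorem, in its discrete form (Tannery's theorem), applied to the countable family $\mathcal{F}=\{F\subset C\mid \#F<\infty\}$. For each $F\in\mathcal{F}$ put
\[a_F(R)=\frac{(-1)^{\#F}\#\left((\mathcal{P}(m_F\mathcal{W}_\mathcal{A},\mathcal{L}_F)\smpt{0})\cap RD\right)}{\vol(RD)}.\]
Three facts must be checked: that $a_F(R)$ converges as $R\to\infty$ for each fixed $F$; that $|a_F(R)|$ is bounded, uniformly in $R$, by a number $g_F$ not depending on $R$; and that $\sum_{F\in\mathcal{F}}g_F<\infty$. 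Granting these, Tannery's theorem (applied along an arbitrary sequence $R_n\to\infty$, which suffices to evaluate the limit in $R$) gives $\sum_F a_F(R)\to\sum_F a_F$, which is exactly the assertion. Throughout I assume $\vol(D)>0$, since for $\vol(D)=0$ one has $\vol(RD)=0$ and the left-hand side is undefined.

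The first two ingredients are essentially in hand. For fixed $F$ the set $\mathcal{P}(m_F\mathcal{W}_\mathcal{A},\mathcal{L}_F)$ is a regular cut-and-project set, so its density exists, and the pointwise limit $\lim_{R\to\infty}a_F(R)=\frac{(-1)^{\#F}m_F^2(1+\sqrt{2})}{2N(\ell_F)^2}=:a_F$ is precisely the density computation already recorded before the statement. For the uniform bound, \Cref{lem-Estimate1} supplies a constant $\widetilde{L}$ depending only on $D$ with $\#((\mathcal{P}(m_F\mathcal{W}_\mathcal{A},\mathcal{L}_F)\smpt{0})\cap RD)\leq \widetilde{L}R^2/N(\ell_F)^2$ for all $R>0$ and all finite $F$. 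Since $\vol(RD)=R^2\vol(D)$, this yields
\[|a_F(R)|\leq \frac{\widetilde{L}}{\vol(D)\,N(\ell_F)^2}=:g_F,\]
a bound independent of $R$, as required.

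The main obstacle is the summability $\sum_{F\in\mathcal{F}}g_F<\infty$, equivalently $\sum_{F\in\mathcal{F}}N(\ell_F)^{-2}<\infty$, and here I would exploit the structure $C=\bb{P}\cup\{\lambda\}$. Since $\lambda$ is a unit it divides every element of $\mathcal{O}_K$, so adjoining or removing $\lambda$ leaves $\ell_F$ unchanged up to a unit; hence $N(\ell_F)=N(\ell_{F\smpt{\lambda}})$, and the map $F\mapsto F\smpt{\lambda}$ is two-to-one onto the finite subsets of $\bb{P}$, multiplying the sum only by $2$. It therefore suffices to control $\sum_{F'}N(\ell_{F'})^{-2}$ over finite $F'\subset\bb{P}$. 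For such $F'$ the elements are pairwise non-associate primes, so a least common multiple is $\prod_{\pi\in F'}\pi$ and $N(\ell_{F'})=\prod_{\pi\in F'}N(\pi)$, where $N(\pi)$ denotes the absolute norm of the prime ideal $\pi\mathcal{O}_K$. Consequently
\[\sum_{\substack{F'\subset\bb{P}\\\#F'<\infty}}\frac{1}{N(\ell_{F'})^2}=\prod_{\pi\in\bb{P}}\left(1+\frac{1}{N(\pi)^2}\right),\]
and this Euler-type product is finite exactly when $\sum_{\pi\in\bb{P}}N(\pi)^{-2}<\infty$. The latter holds because the prime ideals are a subset of all nonzero ideals, so $\sum_{\pi\in\bb{P}}N(\pi)^{-2}\leq \sum_{I\triangleleft\mathcal{O}_K}N(I)^{-2}=\zeta_K(2)<\infty$. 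This gives $\sum_{F\in\mathcal{F}}g_F<\infty$, and Tannery's theorem then delivers the claimed interchange of limit and summation.
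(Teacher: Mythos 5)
Your proposal is correct and takes essentially the same route as the paper: the paper's proof is likewise a discrete dominated-convergence argument, using \Cref{lem-Estimate1} for the same $R$-independent dominating bound and establishing summability by comparing $\sum_F N(\ell_F)^{-2}$ to $2\zeta_K(2)$ (splitting on whether $\lambda\in F$, which matches your two-to-one observation, though the paper injects into squarefree ideals rather than expanding an Euler product). The only substantive difference is presentational: where you cite Tannery's theorem along sequences $R_n\to\infty$, the paper proves the interchange by hand via a head/tail splitting over $\{F \mid |N(\ell_F)|<\Delta\}$, which forces it to verify separately that this index set is finite---a step your packaged appeal to Tannery renders unnecessary, since absolute summability of the dominating series already yields a suitable finite head.
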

	
	\begin{proof}
	For a finite $F\subset C$ let $N(R,F)=\#\left((\mathcal{P}(m_F\mathcal{W}_\mathcal{A},\mathcal{L}_F)\smpt{0})\cap RD\right)$. We know that $\lim\limits_{R\to\infty}\frac{N(R,F)}{\vol(RD)}=\frac{m_F^2(1+\sqrt{2})}{2N(\ell_F)^2}$ so
	\begin{equation}
	\label{eqnSwitchLimitsJustification}
	\lim\limits_{R\to\infty}\sum_{\substack{F\subset C\\\#F<\infty}}\frac{(-1)^{\#F}N(R,F)}{\vol(RD)}=\sum_{\substack{F\subset C\\\#F<\infty}}\lim\limits_{R\to\infty}\frac{(-1)^{\#F}N(R,F)}{\vol(RD)}
	\end{equation}
	must be justified. In view of \Cref{lem-Estimate1}
	\[\sum_{\substack{F\subset C\\\#F<\infty}}\left|\frac{(-1)^{\#F}N(R,F)}{\vol(RD)}\right|\leq \frac{\widetilde{L}}{\vol(D)}\sum_{\substack{F\subset C\\\#F<\infty}}\frac{1}{N(\ell_F)^2}\]
	and we note that 
	\[\sum_{\substack{F\subset C\\\#F<\infty}}\frac{1}{N(\ell_F)^2}=\sum_{\substack{F\subset C\\\#F<\infty\\\lambda\in F}}\frac{1}{N(\ell_F)^2}+\sum_{\substack{F\subset C\\\#F<\infty\\\lambda\notin F}}\frac{1}{N(\ell_F)^2}\leq 2\sum_{I\triangleleft \mathcal{O}_K}\frac{1}{N(I)^2},\]
	hence the sums of both sides of \eqref{eqnSwitchLimitsJustification} are absolutely convergent.
	
	Fix $\Delta>0$. We claim that there is only a finite number of non-empty $F\subset C$, $\#F<\infty$, such that $|N(\ell_F)|<\Delta$. Given such $F$ let $\ell_F=\prod_{c\in F}c>1$. Also, since $|\overline{\pi}|>1$ for all $\pi\in \bb{P}$ we have $|\overline{\ell_F}|\geq |\overline{\lambda}|$. Hence, $|N(\ell_F)|=\ell_F|\overline{\ell_F}|\leq \Delta$ implies $\ell_F\leq \frac{\Delta}{|\overline{\ell_F}|}\leq \lambda \Delta$ and $|\overline{\ell_F}|\leq \frac{\Delta}{\ell_F}\leq \Delta<\lambda\Delta$ so $(\ell_F,\overline{\ell_F})\in \{(x,\overline{x})\mid x\in\mathcal{O}_K\}\cap \lambda[-\Delta,\Delta]^2$ which is a finite set, thus elements of $F$ can only contain prime factors that occur as factors in the components of elements in this finite set, giving only finitely many possibilities for $F$.
	
	It follows that
	\begin{align*}
	&\left|\lim\limits_{R\to\infty}\sum_{\substack{F\subset C\\\#F<\infty}}\frac{(-1)^{\#F}N(R,F)}{\vol(RD)}-\sum_{\substack{F\subset C\\\#F<\infty}}\frac{m_F^2(1+\sqrt{2})}{2N(\ell_F)^2}\right|\\
	&\leq \left(\frac{\widetilde{L}}{\vol(D)}+\frac{1+\sqrt{2}}{2}\right)\sum_{\substack{F\subset C\\\#F<\infty\\|N(\ell_F)|\geq \Delta}}\frac{1}{N(\ell_F)^2}
	\end{align*}
	where the right hand side tends to $0$ as $\Delta\to\infty$ since $\sum\limits_{F\subset C,\#F<\infty,|N(\ell_F)|\geq \Delta}\frac{1}{N(\ell_F)^2}$ is the tail of an absolutely convergent sum, hence \eqref{eqnSwitchLimitsJustification} has been justified.
	\end{proof}
	
	From \Cref{propInterchange} it follows that $\theta(\widehat{\mathcal{B}})=\sum_{\substack{F\subset C\\\#F<\infty}}\frac{(-1)^{\#F}m_F^2(1+\sqrt{2})}{2N(\ell_F)^2}$, and it will now be shown that the right hand side is equal to $1/\zeta_K(2)$. Define the function $\omega:\mathcal{O}_K\longrightarrow\bb{C}$, $\omega(x)=\#\{\pi\in\bb{P}\mid x/\pi\in\mathcal{O}_K\}$, so that $\omega(x)$ is the number of non-associated prime divisors of $x$. Given $I\triangleleft \mathcal{O}_K$, let $\omega(I)=\omega(x)$ for any generator $x$ of $I$ and define a Möbius function on the ideals of $\mathcal{O}_K$ by 
	\[\mu(I)=\begin{cases}0 & \text{ if }\exists \pi\in\bb{P}\text{ such that } I\subset \pi^2\mathcal{O}_K,\\
	(-1)^{\omega(I)} &\text{ otherwise.}
	\end{cases}\]
	One verifies that $\mu(I_1I_2)=\mu(I_1)\mu(I_2)$ for relatively prime ideals $I_1,I_2$. The function $\zeta_K$ can be expressed as an Euler product for $s$ with $\mathrm{Re}(s)>1$ as
	\[\zeta_K(s)=\prod_{P\triangleleft \mathcal{O}_K,\,P\,\mathrm{prime}}\frac{1}{1-N(P)^{-s}}\]
	and in analogy with the reciprocal formula for Riemann's zeta function we have
	\begin{equation}
	\label{eqnRecDedekind}
	\frac{1}{\zeta_K(s)}=\sum_{I\triangleleft \mathcal{O}_K}\frac{\mu(I)}{N(I)^s}.
	\end{equation}
	
	\begin{thm}
	\label{thmAsDensA'}
	The density of visible points of $\mathcal{B}$ is given by
	\[\theta(\mathcal{B})=\frac{1}{\zeta_K(2)}=\frac{48\sqrt{2}}{\pi^4}.\]
	\end{thm}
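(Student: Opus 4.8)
The plan is to evaluate the series $\theta(\widehat{\mathcal{B}})=\frac{1+\sqrt{2}}{2}\sum_{F}\frac{(-1)^{\#F}m_F^2}{N(\ell_F)^2}$, which holds by \Cref{propInterchange}, by first pinning down the weights $m_F$ and then exploiting that $\lambda$ is a unit to collapse the sum to a M\"obius sum over ideals. First I would observe that $m_F$ takes only two values. The computation used in the proof of \Cref{propOcclSet} shows that no point $(x,\overline{x})$ with $x\in\mathcal{O}_K$ lies in $(1,\lambda)\times(-1,1)$; rescaling by units this yields $|\overline{\pi}|>1$ for every $\pi\in\bb{P}$, while $|\overline{\lambda}|=\sqrt{2}-1<1$. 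Hence $m_F=\min\{1,\min_{c\in F}|\overline{c}|\}$ equals $1$ when $\lambda\notin F$ and equals $\sqrt{2}-1$ when $\lambda\in F$.

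Next I would split the sum according to whether $\lambda\in F$, writing each finite $F\subset C$ uniquely as $F'$ or $F'\cup\{\lambda\}$ with $F'\subset\bb{P}$. Since $\lambda$ is a unit, adjoining it leaves $\ell_{F'}$ unchanged up to units, so $N(\ell_{F'\cup\{\lambda\}})=N(\ell_{F'})$, while it multiplies the sign by $-1$ and replaces the weight $m_F^2=1$ by $(\sqrt{2}-1)^2$. The two families therefore combine to
\[\theta(\widehat{\mathcal{B}})=\frac{1+\sqrt{2}}{2}\bigl(1-(\sqrt{2}-1)^2\bigr)\sum_{\substack{F'\subset\bb{P}\\\#F'<\infty}}\frac{(-1)^{\#F'}}{N(\ell_{F'})^2}.\]
Because $(\sqrt{2}-1)^2=3-2\sqrt{2}$, the prefactor is $\frac{1+\sqrt{2}}{2}\cdot 2(\sqrt{2}-1)=(1+\sqrt{2})(\sqrt{2}-1)=1$, so $\theta(\widehat{\mathcal{B}})$ equals precisely the remaining sum over the finite subsets of $\bb{P}$. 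This exact cancellation to the constant $1$ is the delicate point of the whole computation: it hinges on the coincidence $m_\lambda=|\overline{\lambda}|$ together with the two algebraic identities above, so establishing the values of $m_F$ correctly is the step I regard as the main obstacle — everything after it is formal.

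The third step is to recognise the surviving sum as $1/\zeta_K(2)$. As $\mathcal{O}_K$ is a unique factorisation domain and $\bb{P}$ contains exactly one associate of each prime, the assignment $F'\mapsto \ell_{F'}\mathcal{O}_K$ is a bijection from the finite subsets of $\bb{P}$ onto the squarefree ideals of $\mathcal{O}_K$; under it $\#F'=\omega(\ell_{F'}\mathcal{O}_K)$, so $(-1)^{\#F'}=\mu(\ell_{F'}\mathcal{O}_K)$, and $N(\ell_{F'})=N(\ell_{F'}\mathcal{O}_K)$. Extending the summation by the vanishing terms $\mu(I)=0$ on non-squarefree $I$ turns it into $\sum_{I\triangleleft\mathcal{O}_K}\mu(I)/N(I)^2$, which equals $1/\zeta_K(2)$ by \eqref{eqnRecDedekind}.

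Finally, to get the numerical value I would use the factorisation $\zeta_K(s)=\zeta(s)L(s,\chi)$, where $\chi$ is the quadratic Dirichlet character modulo the discriminant $8$ of $K$, i.e.\ the Kronecker symbol $\chi(n)=\leg{2}{n}$, together with $\zeta(2)=\pi^2/6$ and the value $L(2,\chi)=\sqrt{2}\,\pi^2/16$. This last value is the one piece I would read off from the standard closed form for $L$-values at even integers of an even character, namely $L(2,\chi)=\frac{\tau(\chi)}{2}\bigl(\frac{2\pi}{8}\bigr)^2\frac{B_{2,\chi}}{2!}$ with Gauss sum $\tau(\chi)=2\sqrt{2}$ and generalized Bernoulli number $B_{2,\chi}=2$, rather than verify it from scratch. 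Multiplying gives $\zeta_K(2)=\frac{\pi^2}{6}\cdot\frac{\sqrt{2}\,\pi^2}{16}=\frac{\sqrt{2}\,\pi^4}{96}$, whence $1/\zeta_K(2)=48\sqrt{2}/\pi^4$, as claimed.
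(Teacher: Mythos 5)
Your proof is correct and follows essentially the same route as the paper's: the same splitting of the sum over finite $F\subset C$ according to whether $\lambda\in F$, the same determination $m_F\in\{1,\sqrt{2}-1\}$, the same collapse of the prefactor $\frac{1+\sqrt{2}}{2}\left(1-|\overline{\lambda}|^2\right)$ to $1$, and the same identification of the surviving sum with $\sum_{I\triangleleft\mathcal{O}_K}\mu(I)/N(I)^2=1/\zeta_K(2)$ via \eqref{eqnRecDedekind} (your bijection between finite subsets of $\bb{P}$ and squarefree ideals just makes explicit what the paper leaves implicit). The only divergence is the final numerical step: you evaluate $\zeta_K(2)=\zeta(2)L(2,\chi)$ directly from the Gauss sum $\tau(\chi)=2\sqrt{2}$ and $B_{2,\chi}=2$, whereas the paper computes $\zeta_K(-1)=\frac{1}{12}$ and invokes the functional equation — both are standard routes to the same value $\pi^4/(48\sqrt{2})$.
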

	
	\begin{proof}
	By \Cref{propInterchange} we have \[\theta(\mathcal{B})=\sum_{\substack{F\subset C\\\#F<\infty}}\frac{(-1)^{\#F}m_F^2(1+\sqrt{2})}{2N(\ell_F)^2}.\]
	Splitting the sum into two depending on whether $\lambda \in F$ or not, and using that $m_F$ is $1$ unless $\lambda\in F$, in which case $m_F=|\overline{\lambda}|=\sqrt{2}-1$, we get
	\begin{align*}
	\theta(\mathcal{B})&=\sum_{\substack{F\subset C\\\#F<\infty\\\lambda\notin F}}\frac{(-1)^{\#F}(1+\sqrt{2})}{2N(\ell_F)^2}+\sum_{\substack{F\subset C\\\#F<\infty\\\lambda\in F}}\frac{(-1)^{\#F}|\overline{\lambda}|^2(1+\sqrt{2})}{2N(\ell_F)^2}\\
	&=\frac{(1-|\overline{\lambda}|^2)(1+\sqrt{2})}{2}\sum_{I\triangleleft\mathcal{O}_K}\frac{\mu(I)}{N(I)^2}=\frac{1}{\zeta_K(2)},
	\end{align*}
	last equality by \eqref{eqnRecDedekind}. From \cite[Theorem 4.2]{washington1997introduction} one can calculate $\zeta_K(-1)=\frac{1}{12}$ and by the functional equation for Dedekind's zeta function (cf. e.g. \cite[p. 34]{washington1997introduction}) one finds that $\zeta_K(2)=\frac{\pi^4}{48\sqrt{2}}$ which proves the claim.
	\end{proof}
	
	\subsection{The density of visible points of $\mathcal{A}$}
	\label{subsecDensAB}
	
	Observe that $\mathcal{A}'=\sqrt{2}\mathcal{A}\subset \mathcal{B}$. It is now shown that $C$ is also an occluding set for $\mathcal{A}'$.
	
	\begin{prop}
		\label{propOcclSet2}
		For each $x\in\mathcal{A}'\setminus\widehat{\mathcal{A}'}$ there is $c\in C$ such that $x/c\in \mathcal{A}'$.
	\end{prop}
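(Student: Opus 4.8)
The plan is to reduce everything to \Cref{propOcclSet} for $\mathcal{B}$ and then control the single prime that interacts with the defining congruence of $\widetilde{\mathcal{L}}$, namely $\sqrt{2}$. First I would note that occlusion descends from $\mathcal{A}'$ to $\mathcal{B}$: if $x\in\mathcal{A}'\setminus\widehat{\mathcal{A}'}$ then $tx\in\mathcal{A}'\subset\mathcal{B}$ for some $t\in(0,1)$, so $x\in\mathcal{B}\setminus\widehat{\mathcal{B}}$ and \Cref{propOcclSet} yields $c\in C$ with $x/c\in\mathcal{B}$. Since $\mathcal{A}'=\{x\in\mathcal{B}\mid (x_1-x_2)/\sqrt{2}\in\mathcal{O}_K\}$, it remains to promote $x/c\in\mathcal{B}$ to $x/c\in\mathcal{A}'$, i.e. to check $((x/c)_1-(x/c)_2)/\sqrt{2}\in\mathcal{O}_K$, possibly after replacing $c$ by another element of $C$.

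For $c=\lambda$ the congruence is immediate, since $(x_1-x_2)/\sqrt{2}\in\mathcal{O}_K$ and $\lambda$ is a unit; for $c=\pi\in\bb{P}$ with $\pi$ not an associate of $\sqrt{2}$ it follows from $\pi\mid x_1,x_2$ together with $\gcd(\pi,\sqrt{2})=1$, which gives $\pi\sqrt{2}\mid(x_1-x_2)$ and hence $x/\pi\in\mathcal{A}'$ (the window condition being preserved as $|\overline{\pi}|>1$, exactly as in \Cref{propOcclSet}). Thus the only obstruction is $c=\sqrt{2}$ with $x/\sqrt{2}\notin\mathcal{A}'$. Writing $x=(\sqrt{2}a,\sqrt{2}b)$ with $a,b\in\mathcal{O}_K$, this says precisely that $a\not\equiv b\pmod{\sqrt{2}}$. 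Here I would split on $\gcd(a,b)$: if some prime divides both $a$ and $b$ it cannot be $\sqrt{2}$ (else $a\equiv b\equiv 0\pmod{\sqrt{2}}$), so dividing $x$ by the associate of that prime lying in $\bb{P}$ keeps us in $\mathcal{A}'$ by the same coprimality computation.

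The delicate case is $\gcd(a,b)=1$, so that $\gcd(x_1,x_2)=\sqrt{2}$ up to units. Since $x$ is invisible in $\mathcal{A}'$ there is a genuine occluder, and parametrising the points of $\mathcal{O}_K^2$ on the ray through $x$ shows every occluder $x/c\in\mathcal{A}'$ has the form $x/c=(aw,bw)$ with $w\in\mathcal{O}_K$, $0<w<\sqrt{2}$; the congruence then forces $\sqrt{2}\mid w$, leaving $x/c=xv$ with $v\in(0,1)\cap\mathcal{O}_K$. The crucial simplification is that the internal coordinate of $xv$ equals $\overline{v}\,\overline{x}$, a \emph{real} scalar multiple of $\overline{x}$; since $\mathcal{W}_\mathcal{A}$ is star-shaped about the origin and satisfies $-\mathcal{W}_\mathcal{A}=\mathcal{W}_\mathcal{A}$, the window condition $\overline{v}\,\overline{x}\in\mathcal{W}_\mathcal{A}$ depends only on $|\overline{v}|$. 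I would then verify by hand that $1/\lambda=\sqrt{2}-1$ is the unique $v\in(0,1)\cap\mathcal{O}_K$ with $|\overline{v}|\leq\lambda$, so that every admissible $v$ satisfies $|\overline{v}|\geq\lambda=|\overline{1/\lambda}|$. Combining $\overline{v}\,\overline{x}\in\mathcal{W}_\mathcal{A}$ with symmetry and star-shapedness yields $\lambda\overline{x}\in\mathcal{W}_\mathcal{A}$, i.e. $x/\lambda\in\mathcal{A}'$, and $c=\lambda\in C$ works.

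The main obstacle is this last case: everything outside it is bookkeeping about which primes preserve the congruence modulo $\sqrt{2}$, but when $\gcd(a,b)=1$ and $a\not\equiv b\pmod{\sqrt{2}}$ one must genuinely use the geometry of the window. The two ingredients that make it go through are the observation that conjugation acts on the internal space by a real scalar (so the window condition reduces to a one-dimensional magnitude comparison) and the finite verification that $\sqrt{2}-1$ minimises $|\overline{v}|$ over $(0,1)\cap\mathcal{O}_K$.
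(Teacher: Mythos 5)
Your proof is correct, and for most of its length it runs parallel to the paper's: both descend to $\mathcal{B}$ via \Cref{propOcclSet}, both note that any $c\in C$ not associated to $\sqrt{2}$ automatically preserves the congruence $(x_1-x_2)/\sqrt{2}\in\mathcal{O}_K$ (a unit trivially, a prime $\pi$ via $\pi\sqrt{2}\mid x_1-x_2$), and both are then left with a residual case tied to the prime $\sqrt{2}$. The endgames, however, genuinely differ. The paper conditions on the event that \emph{every} $c\in C\smpt{\sqrt{2}}$ fails already in $\mathcal{B}$ --- so that $\gcd(x_1,x_2)$ is a power of $\sqrt{2}$ and, crucially, $x/\lambda\notin\mathcal{B}$ --- writes the occluding scalar as $a/b$ in lowest terms, shows $a$ must be a power of $\sqrt{2}$, concludes $x/\sqrt{2}\in\mathcal{A}'$ when $\sqrt{2}\mid a$, and eliminates the unit-numerator case by the contradiction $|\overline{c}|=|\overline{\lambda}|^{n}\prod_{\pi}|\overline{\pi}|^{-m(\pi)}\leq|\overline{\lambda}|<|\overline{c}|$; the occluder it produces is thus $\sqrt{2}$. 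You instead condition on $\gcd(a,b)=1$ where $x=\sqrt{2}(a,b)$, parametrize all potential $\mathcal{A}'$-occluders on the ray through $x$ as $vx$ with $v\in(0,1)\cap\mathcal{O}_K$ (the broken congruence $a\not\equiv b\pmod{\sqrt{2}}$ forcing the extra factor $\sqrt{2}$ into the scalar), and replace the paper's factorization estimate by the finite check that $v=\sqrt{2}-1$ is the unique element of $(0,1)\cap\mathcal{O}_K$ with $|\overline{v}|\leq\lambda$ (indeed $2m=v+\overline{v}$ confines the rational coefficient $m$ of $v=m+n\sqrt 2$ to $\{-1,0,1\}$, and enumeration leaves only $-1+\sqrt{2}$); star-shapedness and symmetry then convert $\overline{v}\,\overline{x}\in\mathcal{W}_\mathcal{A}$ together with $|\overline{v}|\geq\lambda$ into $\lambda\overline{x}\in\mathcal{W}_\mathcal{A}$, so your occluder is $\lambda$. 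Both arguments ultimately rest on the same arithmetic fact --- an element of $\mathcal{O}_K\cap(0,1)$ has conjugate of modulus at least $\lambda$ --- but the paper extracts it from prime factorization and $|\overline{\pi}|>1$ for $\pi\in\bb{P}$, whereas your coefficient enumeration is self-contained and yields a direct construction rather than a proof by contradiction; the paper's route, conversely, meshes with the ideal-theoretic bookkeeping ($\gcd(x_1,x_2)=\sqrt{2}^{\,n}$) it uses elsewhere. The only cosmetic slack on your side is the redundant parenthetical about the window for $c=\pi$ (membership $x/\pi\in\mathcal{B}$ already comes from \Cref{propOcclSet}) and the unstated trivial case $x=0$, which the paper ignores as well.
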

	
	\begin{proof}	
	Since $\mathcal{A}'\subset \mathcal{B}$ we have $\mathcal{A}'\setminus\widehat{\mathcal{A}'}\subset \mathcal{B}\setminus\widehat{\mathcal{B}}$ and so for each $x\in \mathcal{A}'\setminus\widehat{\mathcal{A}'}$ there exists $c\in C$ such that $x/c\in \mathcal{B}$. If $c\neq \sqrt{2}$ then $\sqrt{2}\mid \frac{x_1-x_2}{c}$ so $x/c\in\mathcal{A}'$.
		
	Take now $x\in \mathcal{A}'\setminus\widehat{\mathcal{A}'}$ such that for all $c\in C\smpt{\sqrt{2}}$ we have $x/c\notin \mathcal{B}$. Then $x/\sqrt{2}\in\mathcal{B}$, hence $\gcd(x_1,x_2)=\sqrt{2}^n$ for some $n\geq1$. Since $x\in \mathcal{A}'\setminus\widehat{\mathcal{A}'}$ there is $c\in \bb{Q}(\sqrt{2})\cap\bb{R}_{>1}$ such that $x/c\in \mathcal{A}'$. Writing $c=a/b$ with $\gcd(a,b)=1$, the only possible $\pi\in\bb{P}$ with $\pi\mid a$ is $\pi=\sqrt{2}$. If $\sqrt{2}\mid a$, then it follows that $x/\sqrt{2}\in\mathcal{A}'$.
		
	It remains to check the case where $a$ is a unit, i.e. $c=\frac{\lambda^n}{\prod_{p\in \bb{P}}\pi^{m(\pi)}}$ for some $m:\bb{P}\longrightarrow\bb{Z}_{\geq 0}$ with finite support. The facts that $c>1$ and $\pi>1$ for all $\pi\in\bb{P}$ imply $n>0$. We have $x/\lambda\notin\mathcal{B}$, hence $\overline{x}\notin |\overline{\lambda}| \mathcal{W}_\mathcal{A}$. Since $x/c\in\sqrt{2}\mathcal{A}$ it follows that $\overline{x}\in |\overline{c}|\mathcal{W}_\mathcal{A}$ and hence $|\overline{c}|>|\overline{\lambda}|$. However
	\[|\overline{c}|=\frac{|\overline{\lambda}|^n}{\prod_{\pi\in \bb{P}}|\overline{\pi}|^{k(\pi)}}\leq |\overline{\lambda}|^n\leq |\overline{\lambda}|,\]
	contradiction.
	\end{proof}
	
	\begin{thm}
	We have $\theta(\widehat{\mathcal{A}'})=\frac{1}{2\zeta_K(2)}$, hence $\theta(\widehat{\mathcal{A}})=\frac{1}{\zeta_K(2)}$. 
	\end{thm}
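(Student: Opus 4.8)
The plan is to mimic the computation of $\theta(\widehat{\mathcal{B}})$ carried out in \Cref{subsecDensAB'}, exploiting that $\mathcal{A}'$ differs from $\mathcal{B}$ only by the single congruence condition $(x_1-x_2)/\sqrt{2}\in\mathcal{O}_K$ that defines the index-$2$ sublattice $\widetilde{\mathcal{L}}\subset\mathcal{L}$. Since \Cref{propOcclSet2} shows that $C$ is an occluding set for $\mathcal{A}'$, \Cref{propInclExcl} applies directly (no separate visibility condition is needed) and gives
\[\#(\widehat{\mathcal{A}'}\cap RD)=\sum_{\substack{F\subset C\\\#F<\infty}}(-1)^{\#F}\#\left(\Big(\mathcal{A}'_*\cap\bigcap_{c\in F}c\mathcal{A}'_*\Big)\cap RD\right).\]
The heart of the argument is to identify each intersection $\mathcal{A}'_*\cap\bigcap_{c\in F}c\mathcal{A}'_*$ as a cut-and-project set, exactly as for $\mathcal{B}$, but now over an index-$2$ sublattice.

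First I would write $x=\ell_F y$ with $y\in\mathcal{O}_K^2$; the divisibility and window conditions then collapse to $y\in\mathcal{O}_K^2$ and $\overline{\ell_F y}\in m_F\mathcal{W}_\mathcal{A}$, precisely as in the $\mathcal{B}$ computation. The new ingredient is the family of congruence conditions: $\sqrt{2}\mid(x_1-x_2)$ from $x\in\mathcal{A}'$, and $\sqrt{2}\mid(x/c)_1-(x/c)_2$ for each $c\in F$ from $x/c\in\mathcal{A}'$. Writing these uniformly as $\sqrt{2}\mid(\ell_F/c)(y_1-y_2)$ and tracking the $\sqrt{2}$-adic valuation $v$, I would note that $v(\ell_F)=1$ exactly when $\sqrt{2}\in F$ and $v(\ell_F)=0$ otherwise, while $v(c)=1$ only for $c=\sqrt{2}$. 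A short case check then shows that the strongest of these congruences is always $\sqrt{2}\mid(y_1-y_2)$: if $\sqrt{2}\notin F$ it comes from the base condition $\sqrt{2}\mid\ell_F(y_1-y_2)$ with $v(\ell_F)=0$, while if $\sqrt{2}\in F$ it comes from the term $c=\sqrt{2}$, for which $v(\ell_F/\sqrt{2})=0$. Hence, in every case,
\[\mathcal{A}'_*\cap\bigcap_{c\in F}c\mathcal{A}'_*=\mathcal{P}(m_F\mathcal{W}_\mathcal{A},\widetilde{\mathcal{L}}_F)\smpt{0},\qquad \widetilde{\mathcal{L}}_F=\{(\ell_F y,\overline{\ell_F y})\mid y\in\mathcal{O}_K^2,\ \sqrt{2}\mid y_1-y_2\},\]
and $\widetilde{\mathcal{L}}_F$ is an index-$2$ sublattice of $\mathcal{L}_F$, so that $\vol(\bb{R}^4/\widetilde{\mathcal{L}}_F)=16N(\ell_F)^2$.

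It follows from \cite[Prop. 3.2]{marklof2014free} that each term has density $\frac{\vol(m_F\mathcal{W}_\mathcal{A})}{16N(\ell_F)^2}=\frac{m_F^2(1+\sqrt{2})}{4N(\ell_F)^2}$, which is exactly one half of the corresponding term in the sum for $\theta(\widehat{\mathcal{B}})$, since $m_F$ and $\ell_F$ are defined identically here. To pass to the limit I would reuse \Cref{lem-Estimate1}: because $\widetilde{\mathcal{L}}_F\subset\mathcal{L}_F$, the point count for $\mathcal{A}'$ is dominated by that for $\mathcal{B}$, so the same bound $\frac{\widetilde{L}R^2}{N(\ell_F)^2}$ holds and the interchange of limit and summation justified in \Cref{propInterchange} applies word for word. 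Summing then gives $\theta(\widehat{\mathcal{A}'})=\tfrac12\theta(\widehat{\mathcal{B}})=\tfrac{1}{2\zeta_K(2)}$ by \Cref{thmAsDensA'}. Finally, since $\mathcal{A}=\tfrac{1}{\sqrt{2}}\mathcal{A}'$, the scaling relations $\theta(c\mathcal{P})=c^{-d}\theta(\mathcal{P})$ and $c\widehat{\mathcal{P}}=\widehat{c\mathcal{P}}$ recorded in \Cref{secDensAB} yield $\theta(\widehat{\mathcal{A}})=2\,\theta(\widehat{\mathcal{A}'})=\tfrac{1}{\zeta_K(2)}$.

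The main obstacle is the valuation bookkeeping of the second paragraph: one must check that the congruences defining $\mathcal{A}'$ and all its dilates impose a \emph{single} index-$2$ constraint on $y$ uniformly in $F$, and in particular that admitting $\sqrt{2}\in F$ does not degrade this to an index-$1$ condition. Once this uniform index-$2$ reduction is in place, the entire computation reduces to the already-established $\mathcal{B}$ case through a single overall factor of $\tfrac12$, requiring no new analytic estimates beyond those of \Cref{lem-Estimate1} and \Cref{propInterchange}.
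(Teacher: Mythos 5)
Your proposal is correct and follows essentially the same route as the paper: occluding set (\Cref{propOcclSet2}) plus inclusion-exclusion (\Cref{propInclExcl}), identification of each intersection as $\mathcal{P}(m_F\mathcal{W}_\mathcal{A},\widetilde{\mathcal{L}}_F)\smpt{0}$ over the index-$2$ sublattice, the density formula from \cite[Prop.\ 3.2]{marklof2014free}, the interchange via \Cref{lem-Estimate1}/\Cref{propInterchange}, and the final rescaling. Your $\sqrt{2}$-adic valuation bookkeeping simply fills in what the paper dismisses as ``straight-forward to verify'' (and your $16N(\ell_F)^2$ corrects an apparent typo, $16N(\ell_F)$, in the paper's displayed sum), so no substantive difference remains.
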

	
	\begin{proof}
	Propositions \ref{propInclExcl}, \ref{propOcclSet2} imply
	\begin{equation}
	\label{eqnB1}
	\frac{\#(\widehat{\mathcal{A}'}\cap RD)}{\vol(RD)}=\sum_{\substack{F\subset C\\ \#F<\infty}}\frac{(-1)^{\#F}\#\left(\left(\mathcal{A}_*'\cap \bigcap_{c\in F}c\mathcal{A}_*'\right)\cap RD\right)}{\vol(RD)}
	\end{equation}
	and it is straight-forward to verify that $\mathcal{A}_*'\cap \bigcap_{c\in F}c\mathcal{A}_*'=\mathcal{P}(m_F\mathcal{W}_\mathcal{A},\widetilde{\mathcal{L}}_F)\smpt{0}$ with $\widetilde{\mathcal{L}}_F=\{(\ell_Fx,\overline{\ell_Fx})\mid x\in \mathcal{O}_K^2,(x_1-x_2)/\sqrt{2}\in\mathcal{O}_K\}$ a sublattice of $\mathcal{\mathcal{L}}_F$ of index $2$. Hence, by \cite[Prop. 3.2]{marklof2014free}, when letting $R\to\infty$ inside the sum \eqref{eqnB1} one obtains
	\[\sum_{\substack{F\subset C\\ \#F<\infty}}\frac{(-1)^{\#F}\vol(m_F\mathcal{W}_\mathcal{A})}{16N(\ell_F)},\]
	whence $\theta(\widehat{\sqrt{2}\mathcal{A}})=\frac{1}{2\zeta_K(2)}$ follows by \Cref{propInterchange} and \Cref{thmAsDensA'}, and the other result is immediate as $\sqrt{2}\mathcal{A}=\mathcal{A}'$.
	\end{proof}
	
	\begin{rem}
	The data of Table 2 of \cite{baake2014radial} shows that $\#(\widehat{\mathcal{A}}\cap RD)/\#(\mathcal{A}\cap RD)\approx 0.577$ for a particular $D$ and fairly large $R$. This agrees with our results, since 
	\[\kappa_\mathcal{A}=\lim_{R\to\infty}\frac{\#(\widehat{\mathcal{A}}\cap RD)}{\#(\mathcal{A}\cap RD)}=\frac{\theta(\widehat{\mathcal{A}})}{\theta(\mathcal{A})}=\frac{\tfrac{1}{\zeta_K(2)}}{\tfrac{2\vol(\mathcal{W}_\mathcal{A})}{16}}=\frac{2(\sqrt{2}-1)}{\zeta_K(2)}=0.5773\ldots\]
	\end{rem}
	
	\bibliographystyle{siam}
	\bibliography{bibl}	
	
\end{document}